\tikzstyle{vertex}=[circle,draw=black,fill=black,inner sep=0,minimum size=0.2cm,text=white,font=\footnotesize]
\newcommand{\R}{{\mathbb R}}
\newtheorem*{thm*}{Theorem}
\newcommand{\ff}{{\mathcal F}}
\newcommand{\aaa}{{\mathcal A}}
\newtheorem*{cla*}{Claim}
\newcommand{\bb}{{\mathcal B}}
\newcommand{\hh}{\mathcal H}
\newtheorem{thm}{Theorem}
\newtheorem{opr}{Definition}
\newtheorem{lem}[thm]{Lemma}
\date{}
\newtheorem{prop}[thm]{Proposition}
\title{Families of vectors without antipodal pairs}
\author{Peter Frankl, Andrey Kupavskii
}
\date{}
\begin{document}
\maketitle
\begin{abstract} Some Erd\H os-Ko-Rado type extremal properties of families of vectors from $\{-1,0,1\}^n$ are considered.
\end{abstract}
Keywords: antipodal pairs, Erd\H os-Ko-Rado theorem, families of vectors\vskip+0.1cm\noindent
AMS classification: O5D05, 05C65

\section{Introduction}
The standard $n$-cube is formed by all vectors $v=(v_1,\ldots, v_n)$ with $v_i\in\{0,1\}$. Setting $F(v):=\{i:v_i=1\}$ is a natural way to associate a subset of $[n]:=\{1,\ldots, n\}$ with a vertex of the $n$-cube. This association has proved very useful in tackling various problems in discrete geometry. In particular, intersection theorems concerning finite sets were the main tool in proving exponential lower bounds for the chromatic number of $\R^n$ and disproving Borsuk's  conjecture in high dimensions (cf. \cite{FW}, \cite{KK}).

In this short note we consider $(0,\pm 1)$-vectors, that is, vectors $v = (v_1,\ldots, v_n)$, where each $v_i$ is $0,1,$ or $-1$. Probably the first non-trivial extremal result concerning these objects was a result of Deza and the first author \cite{DF} showing that in a certain situation one can prove the same best possible upper bound for $(0,\pm 1)$-vectors as for the restricted case of $(0,1)$-vectors.

Raigorodskii \cite{Rai1} and others (cf, e.g., \cite{PR}, \cite{K}) have used a similar approach to improve the bounds for the above-mentioned and related discrete geometry problems, obtained via $(0,1)$-vectors, by considering $(0,\pm 1)$-vectors.

Motivated by such results we propose to investigate the following problem. Let $k\ge l\ge 1$ be integers and let $V(n,k,l)$ denote the set of all $(0,\pm 1)$-vectors of length $n$ and having exactly $k$ coordinates equal to $+1$ and $l$ coordinates equal to $-1$. Note that $$|V(n,k,l)|= {n\choose k}{n-k\choose l} = {n\choose k+l}{k+l\choose l}.$$

For two vectors let $\langle v, w \rangle$ denote their scalar product: $\langle v,w\rangle = \sum_{i=1}^n v_iw_i$.

If $v,w\in V(n,k,l)$, then they possess altogether $2l$ coordinates equal to $-1$. Thus $$\langle v,w\rangle \ge -2l.$$
If $\langle v,w\rangle =-2l$, then we call these two vectors \underline{antipodal}. Note that for a fixed $v\in V$ there are ${k\choose l}{n-k-l\choose k-2l}$ antipodal vectors $w\in V$. To avoid trivialities, we assume in what follows that $n\ge 2k$.

\textbf{Example 1 } Let $\mathcal G\subset V(n,k,l)$ consist of those vectors whose \underline{last} non-zero coordinate is a $-1$. Then $|\mathcal G| = {n\choose k+l}{k+l-1\choose l-1}$, and it is easy to see that $\mathcal G$ contains no two antipodal vectors.

The purpose of this note is to prove the following two theorems.

\begin{thm}\label{thm1} Suppose that $\ff\subset V(n,k,l)$ does not contain two antipodal vectors. Then
\begin{equation}\label{eq0}|\ff|\le {n\choose k+l}{k+l-1\choose l-1}+{n\choose 2l}{2l\choose l}{n-2l-1\choose k-l-1}.\end{equation}
\end{thm}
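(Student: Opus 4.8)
The plan is to separate $\ff$ according to the sign of its last nonzero coordinate and to measure it against the family $\mathcal G$ of Example~1. Write $\mathcal N=\mathcal G$ for the vectors whose last nonzero coordinate is $-1$ and $\mathcal P$ for those whose last nonzero coordinate is $+1$, so that $V(n,k,l)=\mathcal N\sqcup\mathcal P$, and put $\ff_-=\ff\cap\mathcal N$, $\ff_+=\ff\cap\mathcal P$. Since $|\mathcal N|=\binom{n}{k+l}\binom{k+l-1}{l-1}$ is exactly the first term of \eqref{eq0}, the theorem is equivalent to
\begin{equation*}|\ff_+|\le|\mathcal N\setminus\ff|+\binom{n}{2l}\binom{2l}{l}\binom{n-2l-1}{k-l-1},\end{equation*}
so the whole problem reduces to controlling the ``last--positive'' part $\ff_+$ in terms of the room left unused inside $\mathcal N$.

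The engine I would use is a canonical antipodal pairing $\phi\colon\mathcal P\to\mathcal N$. Given $v\in\mathcal P$ with top coordinate $p=\max\operatorname{supp}(v)$ (so $v_p=+1$), build an antipodal partner $w=\phi(v)$ by letting $N(w)$ be the $l$ largest positive coordinates of $v$ (in particular $p\in N(w)$) and $P(w)=N(v)\cup D$, where $D$ consists of the $k-l$ largest zero coordinates of $v$ lying below $p$. One checks directly that $w\in V(n,k,l)$, that $\langle v,w\rangle=-2l$, and that $\operatorname{top}(w)=p\in N(w)$, whence $w\in\mathcal N$. The point is that $w$ is antipodal to $v$, so if $v\in\ff_+\subset\ff$ then $w\notin\ff$, i.e. $\phi(v)\in\mathcal N\setminus\ff$. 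Hence $\phi(\ff_+)\subseteq\mathcal N\setminus\ff$ and
\begin{equation*}|\ff_+|\le|\phi(\ff_+)|+\sum_{w}\bigl(|\phi^{-1}(w)\cap\ff_+|-1\bigr)^+\le|\mathcal N\setminus\ff|+\sum_{w}\bigl(|\phi^{-1}(w)\cap\ff_+|-1\bigr)^+,\end{equation*}
so everything comes down to bounding the total overcount of $\phi$ on $\ff_+$.

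When $k=l$ there is no freedom: $D$ is empty, $\phi(v)=-v$ is a bijection $\mathcal P\to\mathcal N$, the overcount vanishes, and the estimate collapses to $|\ff|\le|\mathcal N|$, matching $\binom{n}{2l}\binom{2l}{l}\binom{n-2l-1}{-1}=0$. For $k>l$ the map $\phi$ is genuinely many--to--one, since the ``private'' positive coordinates of $v$ that are zero in $w$ are invisible to $w$, and it is exactly this loss of information that the second term must absorb. The hard part of the proof is therefore to show, using that $\ff$ (and hence each fibre $\phi^{-1}(w)\cap\ff_+$) contains no antipodal pair, that $\sum_{w}(|\phi^{-1}(w)\cap\ff_+|-1)^+\le\binom{n}{2l}\binom{2l}{l}\binom{n-2l-1}{k-l-1}$. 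I expect the collisions to be indexed by the triple $\bigl(N(v),N(w),g\bigr)$, where $g$ is the largest coordinate outside $N(v)\cup N(w)$: the two disjoint $l$-sets $N(v),N(w)$ supply the factor $\binom{n}{2l}\binom{2l}{l}=\binom{n}{l}\binom{n-l}{l}$, while the requirement that $g$ be a \emph{positive} coordinate of $v$ (equivalently, that $g$ lie in the $(k-l)$-set of private positive coordinates) supplies the factor $\binom{n-2l-1}{k-l-1}$, the number of $(k-l)$-subsets of $[n]\setminus(N(v)\cup N(w))$ through its maximum.

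To make this fibre analysis clean I would first apply sign--preserving left--shifting operators $S_{ij}$, moving a nonzero entry from a later coordinate $j$ to an empty earlier coordinate $i$, reducing to a compressed family while checking, as usual, that shifting changes neither $|\ff|$ nor the absence of antipodal pairs; on a compressed family the fibres of $\phi$ should acquire an ``initial segment'' shape that lets the overcount be summed explicitly. The single most delicate point — and the one I would attack first — is precisely the verification that antipodal--freeness survives the shifting and forces the fibrewise bound above, since getting the constant in the second term exactly right, rather than merely up to a factor, is where the argument must be tight.
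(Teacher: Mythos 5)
Your strategy---an explicit antipodal matching $\phi:\mathcal{P}\to\mathcal{N}$ plus a bound on its overcount---is genuinely different from the paper's proof (which deletes vectors belonging to ``small'' families $\ff(A,B)$ and then applies Erd\H os--Ko--Rado to the negative supports inside each fixed $(k+l)$-set), and your reduction of the theorem to the inequality $|\ff_+|\le|\mathcal{N}\setminus\ff|+{n\choose 2l}{2l\choose l}{n-2l-1\choose k-l-1}$ is correct, as is the case $k=l$. But for $k>l$ the proposal has a genuine gap at exactly the point you defer, and the mechanism you suggest for closing it cannot work. Any two vectors $v,v'$ in the same fibre $\phi^{-1}(w)$ satisfy $S_+(v)\cap S_+(v')\supseteq S_-(w)\ne\emptyset$, so \emph{no} fibre ever contains an antipodal pair, whether or not $\ff$ does; your parenthetical ``each fibre $\phi^{-1}(w)\cap\ff_+$ contains no antipodal pair'' is vacuously true and yields no constraint whatsoever on fibre sizes. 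Consequently the needed bound on $\sum_w(|\phi^{-1}(w)\cap\ff_+|-1)^+$ can only come from global interactions between different fibres and between $\ff_+$ and $\ff_-$, and the proposal supplies none. This is not a technicality: dropping antipodal-freeness, the total overcount of $\phi$ on its whole domain is at least $|\mathcal{P}|-|\mathcal{N}|-O(1)={n\choose k+l}\big[{k+l-1\choose l}-{k+l-1\choose l-1}\big]-O(1)=\Theta(n^{k+l})$, an order of magnitude larger than the permitted error $\Theta(n^{k+l-1})$. So the inequality you need is precisely where the whole difficulty of the theorem lives, and the triple-indexing by $(S_-(v),S_-(w),g)$ is a count that happens to match the target numerically, not an injection you have constructed.

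Two further concrete problems. First, the compression tool you plan to use is broken: sign-preserving left shifts do \emph{not} preserve antipodal-freeness. Take $n=5$, $k=2$, $l=1$, $u=(-1,0,1,1,0)$, $w=(0,1,-1,0,1)$. Then $\langle u,w\rangle=-1$, so $\{u,w\}$ is antipodal-free; but $u$ is untouched by $S_{15}$ (its first coordinate is nonzero), while $w$ shifts to $w'=(1,1,-1,0,0)$, and $\langle u,w'\rangle=-2=-2l$, so the shifted family contains an antipodal pair. Second, $\phi$ is not even defined on every $v\in\mathcal{P}$ with $S(v)\subseteq[2k-1]$: below $p=\max S(v)$ there are only $p-k-l<k-l$ zero coordinates, so the set $D$ does not exist; this is negligible for large $n$ but is a constant fraction of $V(n,k,l)$ when $n$ is close to $2k$, which the theorem also covers. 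The paper's proof avoids all of these issues by never constructing a matching: Lemma~\ref{lem1} shows $\ff(A,B)$ and $\ff(B,A)$ are cross-intersecting, the deletion step removes at most ${n\choose 2l}{2l\choose l}{n-2l-1\choose k-l-1}$ vectors (one ``small'' family per ordered pair of disjoint $l$-sets), and Proposition~\ref{prop1} then forces the negative supports of the surviving vectors with any fixed support $T$ to form an intersecting family, to which EKR applies. If you want to salvage your matching approach, the part that must be invented from scratch is a global argument converting antipodal-freeness of $\ff$ into the fibrewise overcount bound---and given the failure of shifting, it would have to proceed without compression.
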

Note that the last term of \eqref{eq0} is $O(n^{k+l-1})$. We also put ${n-2l-1\choose k-l-1}:=0$ for $k=l$. Thus \eqref{eq0} shows that Example 1 is asymptotically best possible. \\

\textbf{Example 2 } Let $\mathcal E\subset V(n,k,l)$ consist of those vectors whose \underline{first}  coordinate is a $1$. Then $|\mathcal E| = {n-1\choose k+l-1}{k+l-1\choose k-1} = \frac k n|V(n,k,l)|$, and $\mathcal E$ contains no two antipodal vectors.

\begin{thm}\label{thm2} Suppose that $\ff\subset V(n,k,l)$ does not contain two antipodal vectors. If $2k\le n\le 3k-l$, then \begin{equation}\label{eq1} |\ff|\le \frac k n|V(n,k,l)|.\end{equation}\end{thm}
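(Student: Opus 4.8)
The plan is to first translate antipodality into set language. Writing $P_v$ and $N_v$ for the coordinates on which $v$ equals $+1$ and $-1$, a one-line check shows that $\langle v,w\rangle=-2l$ forces $|P_v\cap N_w|=|N_v\cap P_w|=l$ and $|P_v\cap P_w|=|N_v\cap N_w|=0$; equivalently, $v$ and $w$ are antipodal iff $P_v\cap P_w=\emptyset$, $N_v\subseteq P_w$ and $N_w\subseteq P_v$. In particular antipodality is \emph{strictly stronger} than disjointness of the plus-supports. This already produces the target value on the easy side: if $\ff$ has no two plus-disjoint vectors, then the $k$-sets $\{P_v\}$ form an intersecting family, so by the Erd\H os--Ko--Rado theorem (valid since $n\ge 2k$) there are at most $\binom{n-1}{k-1}$ distinct plus-supports, each carrying at most $\binom{n-k}{l}$ vectors, giving $|\ff|\le\binom{n-1}{k-1}\binom{n-k}{l}=\frac kn|V(n,k,l)|$; and Example~2 meets this. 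The whole difficulty of Theorem~\ref{thm2} is that $\ff$ need only avoid the \emph{smaller} set of antipodal pairs, so a priori it could be larger, and the hypothesis $n\le 3k-l$ is exactly what rules this out.

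For the upper bound I would run Katona's cyclic-permutation method on the plus-supports. Fix the cyclic order $1,2,\dots,n,1$ and call $v$ an \emph{arc-vector} if $P_v$ is a block of $k$ cyclically consecutive positions; there are $n$ arcs $A_1,\dots,A_n$, and, writing $g_j$ for the number of members of $\ff$ with $P_v=A_j$, each $g_j\le\binom{n-k}{l}$. Since permuting coordinates preserves antipodality, the whole theorem reduces to the following Key Lemma: if $2k\le n\le 3k-l$ and $\ff$ is antipodal-free, then $\sum_{j=1}^n g_j\le k\binom{n-k}{l}$. Granting this, I apply it to $\pi\ff$ for every $\pi\in S_n$ and double count: $\sum_\pi|\{\text{arc-vectors in }\pi\ff\}|=|\ff|\cdot c$, where $c$ is the (vector-independent) number of $\pi$ making a fixed vector an arc-vector, while the left side is at most $n!\,k\binom{n-k}{l}$. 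As $c=n!\,n\binom{n-k}{l}/|V(n,k,l)|$, this rearranges to exactly $|\ff|\le\frac kn|V(n,k,l)|$, so the averaging step is routine.

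The Key Lemma is where $n\le 3k-l$ enters and is the main obstacle. When $l\ge 1$ we have $n<3k$, so no three arcs are pairwise disjoint: $A_j$ and $A_{j'}$ are disjoint exactly when $j'-j\pmod n\in\{k,\dots,n-k\}$, and the union of all arcs disjoint from a fixed $A_j$ is precisely $[n]\setminus A_j$. For an ordered disjoint pair set $\alpha_{j\to j'}=1$ iff $\ff$ contains a vector over $A_j$ whose minus-support lies inside $A_{j'}$ (there are $\binom kl$ such minus-supports), and the antipodal-free condition is exactly that $\alpha_{j\to j'}$ and $\alpha_{j'\to j}$ are never both $1$. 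I would then argue by exchange: if the arcs carrying positive weight pairwise intersect, Katona's arc bound gives at most $k$ of them and $\sum g_j\le k\binom{n-k}{l}$ immediately; otherwise one must delete $\binom kl$ vectors to switch off each offending flag, and the inequality $n-2k\le k-l$ — that the free region outside two disjoint arcs has size at most $k-l$ — is what guarantees that opening disjoint arcs never beats the intersecting configuration. Making this accounting rigorous, in particular tracking that one deleted vector may extinguish several flags and that the two opposite clusters of consecutive arcs interact only through the circulant disjointness pattern, is the technical heart of the proof. For $l=0$ the Key Lemma degenerates to Katona's statement that an intersecting family of cyclic $k$-arcs has at most $k$ members, and Theorem~\ref{thm2} specialises to Erd\H os--Ko--Rado in the range $2k\le n\le 3k$.
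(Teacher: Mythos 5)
Your opening reformulation of antipodality is correct, your averaging computation is correct, and your overall frame --- Katona's cyclic method plus double counting over all permutations --- is indeed the paper's strategy. However, the Key Lemma to which you reduce the whole theorem is \emph{false}, so the proof breaks precisely at what you yourself call its technical heart. Counterexample: take $n=5$, $k=2$, $l=1$ (so $2k\le n\le 3k-l$ holds) and consider the seven vectors
\begin{align*}
&(1,1,-1,0,0),\quad (1,1,0,-1,0),\quad (1,1,0,0,-1),\\
&(-1,1,1,0,0),\quad (0,1,1,-1,0),\quad (0,1,1,0,-1),\\
&(0,0,1,1,-1).
\end{align*}
All seven are arc-vectors, with plus-supports $\{1,2\}$, $\{2,3\}$, $\{3,4\}$. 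Since antipodality requires disjoint plus-supports, the only candidate pairs live on the unique disjoint pair of arcs present, $\{1,2\}$ and $\{3,4\}$; but the single vector on $\{3,4\}$ has its $-1$ at position $5\notin\{1,2\}$, so it is antipodal to nothing. Hence this family is antipodal-free, yet $\sum_j g_j=7>6=k\binom{n-k}{l}$. This refutes both the Key Lemma and your heuristic that ``opening disjoint arcs never beats the intersecting configuration'': the best intersecting configuration here is two fully loaded arcs, i.e.\ $6$ vectors. Worse, it shows the defect cannot be repaired by a cleverer proof of a weaker constant within your scheme: since the per-permutation maximum over the universe of \emph{all} arc-vectors is at least $7$ of the $15$ such vectors, your averaging can only ever give $|\ff|\le\frac{7}{15}|V(5,2,1)|=14$, strictly weaker than the claimed bound $\frac kn|V(5,2,1)|=12$.

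The paper sidesteps this by using, for each permutation, a much sparser test family $\hh$ of only $n$ vectors: one vector per cyclic $k$-arc, whose minus-support is \emph{rigidly coupled} to the arc, namely the $l$-interval starting exactly $k$ positions before the start of the plus-arc (indices mod $n$). The point of the coupling --- and this is exactly where $n\le 3k-l$ enters --- is that if two members of $\hh$ have disjoint plus-arcs, then each one's minus-interval is forced to lie inside the other's plus-arc, so within $\hh$ disjointness of plus-supports already implies antipodality. Consequently the plus-supports of $\hh(\sigma)\cap\ff$ form an intersecting family of cyclic arcs, Katona's bound gives $|\hh(\sigma)\cap\ff|\le k$, and the same double counting you propose finishes the proof. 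In your aggregated universe the minus-supports are unconstrained, and that freedom is exactly what the counterexample exploits; replacing ``all arc-vectors'' by this rigid one-vector-per-arc family makes the rest of your argument go through essentially verbatim.
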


Theorem~\ref{thm2} shows that Example 2 is best possible for $2k\le n\le 3k-l$. We note that the case $n\le 2k$ can be easily reduced to sets setting and the Erd\H os-Ko-Rado theorem (see below).
Let us also mention that in \cite{FK1} we gave the complete solution for the case $l=1$:

\begin{thm*}[Frankl, Kupavskii \cite{FK1}] Suppose that $\ff\subset V(n,k,1)$ does not contain two antipodal vectors. Then  one has
\begin{align*}|\ff|&\le  k{n-1\choose k} \text{\ \ \qquad \qquad \quad \ \ \ \ \ for\ }2k\le n\le k^2,\\
|\ff| \le k{k^2-1\choose k}+{k^2\choose k}&+{k^2+1\choose k}+\ldots +{n-1\choose k}  \text{\ \ \ \ \ \ \ \ for\ } n> k^2.
\end{align*}
Both inequalities are best possible.
\end{thm*}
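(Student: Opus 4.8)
The plan is to translate the antipodal condition into set language and then prove the upper bound by induction on $n$, with the two regimes emerging from a single local estimate whose behaviour reverses at $n=k^2$. Identify $v\in V(n,k,1)$ with the pointed $(k+1)$-set $(S,a)$, where $S$ is its support and $a$ the unique coordinate carrying $-1$; equivalently write $v=(A,a)$ with $A$ the $+1$-set. A direct check gives that $v=(A,a)$ and $w=(B,b)$ are antipodal if and only if $A\cap B=\emptyset$, $a\in B$ and $b\in A$, that is, $S\cap T=\{a,b\}$ with $a$ the point of $S$ lying in $T$ and $b$ the point of $T$ lying in $S$. Before attacking the bound I would record the configuration behind the second regime, since it guides everything: take every vector whose support meets $\{k^2+1,\dots,n\}$ and whose last nonzero coordinate is a $-1$, together with every vector supported in $[k^2]$ whose first coordinate is a $+1$. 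One checks this family is antipodal-free (the first part is an instance of Example~1, the second of Example~2, and across the two parts a $-1$ placed above $k^2$ can never lie in a support contained in $[k^2]$), and, using the identity $k\binom{k^2-1}{k}=(k^2-k)\binom{k^2-1}{k-1}$, that it has exactly $k\binom{k^2-1}{k}+\sum_{m=k^2}^{n-1}\binom{m}{k}$ members. This yields the lower bound and exposes the threshold as the crossover between Example~1 (used on the large coordinates) and Example~2 (used on $[k^2]$).

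For the upper bound I would induct on $n$, splitting $\ff$ by the $n$-th coordinate into $\ff_0$ (zero at $n$), $\ff_+$ ($+1$ at $n$) and $\ff_-$ ($-1$ at $n$). The family $\ff_0$ lies in $V(n-1,k,1)$ and is handled by the inductive hypothesis. The purpose of the split is that $\ff_-$ is essentially free: two vectors with a $-1$ in position $n$ are never antipodal, because antipodality would require $n$ to be a $+1$ of one of them, and for the same reason no vector of $\ff_-$ is antipodal to any vector of $\ff_0$. Thus $\ff_-$ may be taken as large as its conflicts with $\ff_+$ allow, and the whole difficulty concentrates in $\ff_+$, whose vectors $(B,b)$ (with $n\in B$) are cross-constrained against both $\ff_-$ and $\ff_0$.

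The crux is a local lemma bounding $|\ff_+|+|\ff_-|$ under these cross-constraints. Writing $B'=B\setminus\{n\}$, a vector $(A,n)\in\ff_-$ is antipodal to $(B,b)\in\ff_+$ exactly when $b\in A$ and $A\cap B'=\emptyset$, so each element of $\ff_+$ blocks $\binom{n-k-1}{k-1}$ potential members of $\ff_-$, while each element of $\ff_+$ is itself an Example~2-type vector carrying its $+1$ in the top coordinate. I would analyse this bipartite conflict structure by a Kruskal--Katona / cross-intersecting estimate, the net effect of admitting $\ff_+$-vectors being governed by the comparison of $\binom{n-2}{k}$ with $(k-1)\binom{n-2}{k-1}$. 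This comparison reverses around $n=k^2$: for $n>k^2$ the blocking cost outweighs the gain, forcing (after an exchange argument) $\ff_+=\emptyset$ and the clean recursion $f(n)=f(n-1)+\binom{n-1}{k}$, which telescopes from $f(k^2)=k\binom{k^2-1}{k}$ to the second bound; for $2k\le n\le k^2$ a dual compression that pushes every $+1$ to the front makes Example~2 canonical and gives $g(n)=k\binom{n-1}{k}$. One then checks that the two closed forms agree at the boundary, both equalling $k\binom{k^2-1}{k}$ at $n=k^2$.

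The main obstacle is precisely this local lemma: controlling $\ff_+$ demands an exchange/compression argument that is simultaneously compatible with the antipodal relation and sharp enough to pin the threshold at exactly $n=k^2$ rather than merely up to lower-order terms. A genuine complication is that the two extremal families are compressed in opposite directions---Example~1 pushes the $-1$ to the back, Example~2 pushes a $+1$ to the front---so no single shifting order canonicalises both, and the argument must switch compression direction according to the regime. The base case $n=2k$ reduces to the Erd\H{o}s--Ko--Rado theorem as indicated after Theorem~\ref{thm2}, and best-possibility in both regimes is witnessed by the constructions above, namely Example~2 for $2k\le n\le k^2$ and the hybrid family for $n>k^2$.
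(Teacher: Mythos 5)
First, a remark on the comparison itself: the paper you are working from does not prove this statement at all --- it is quoted as an external result from \cite{FK1} --- so your proposal can only be judged on its own merits, not against an in-paper argument.

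Your preparatory work is correct: the set-language reformulation of antipodality, the verification that the hybrid family (Example~1 on supports meeting $\{k^2+1,\dots,n\}$ together with Example~2 inside $[k^2]$) is antipodal-free with exactly $k\binom{k^2-1}{k}+\sum_{m=k^2}^{n-1}\binom{m}{k}$ members, and the conflict structure of the split by the $n$-th coordinate (no antipodal pairs inside $\ff_-$, inside $\ff_+$, or between $\ff_-$ and $\ff_0$). But the proof has a genuine gap exactly where you yourself locate ``the crux'': the local lemma is never proved, and in the only form in which your induction could use it, it is false. For the recursion to close in the regime $2k\le n\le k^2$, you charge $\ff_0$ to the inductive hypothesis at its full allowance $k\binom{n-2}{k}$, so you would need
\begin{equation*}
|\ff_+|+|\ff_-|\;\le\; k\binom{n-1}{k}-k\binom{n-2}{k}\;=\;k\binom{n-2}{k-1}
\end{equation*}
to follow from the cross-constraints between $\ff_+$ and $\ff_-$ alone. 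It does not: take $\ff_+$ to be \emph{all} vectors with a $+1$ in coordinate $n$ and $\ff_-=\emptyset$. This configuration has no antipodal pair inside $\ff_+\cup\ff_-$, yet $|\ff_+|=(n-k)\binom{n-1}{k-1}\ge k\binom{n-1}{k-1}>k\binom{n-2}{k-1}$ for every $n\ge 2k$. Such an $\ff_+$ is of course incompatible with $\ff_0$ being large, but that is precisely the point: any valid bound on $|\ff_+|+|\ff_-|$ must exploit the interaction with $\ff_0$, and then you can no longer simultaneously award $\ff_0$ its full inductive bound and treat $\ff_+,\ff_-$ by a separate estimate. The real proof must resolve this three-way trade-off (for instance by strengthening the induction hypothesis, or by a global cross-intersecting argument in the spirit of \cite{FK3}), and nothing in your sketch does so.

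The secondary steps are also asserted rather than proved, and they hide real difficulties. The compression ``pushing every $+1$ to the front'' is never defined, and it is not clear that any standard shift preserves antipodal-freeness: the relation requires containment of each $S_-$ in the other vector's $S_+$ \emph{and} disjointness of the two $S_+$ sets, and these two conditions respond to shifting in opposite ways --- this is exactly the obstruction you flag (the two extremal examples compress in opposite directions), but flagging it is not the same as overcoming it. The claim that an exchange argument forces $\ff_+=\emptyset$ when $n>k^2$ is likewise unestablished. Finally, your governing comparison of $\binom{n-2}{k}$ with $(k-1)\binom{n-2}{k-1}$ ties at $n=k^2+1$, not $n=k^2$; this is consistent with the theorem only because the two closed forms happen to agree at both $n=k^2$ and $n=k^2+1$ (indeed $k\binom{k^2}{k}=k\binom{k^2-1}{k}+\binom{k^2}{k}$), so the threshold cannot be read off from that comparison alone. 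In short: correct constructions and a plausible framework, but the central quantitative lemma --- which you identify as the main obstacle --- is missing, and its most natural formulation is untrue.
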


The proof of Theorem~\ref{thm1} is rather short, but it relies on some classical results in extremal set theory.

\begin{opr} Two families $\aaa, \bb$ of finite sets are called \underline{cross-intersecting}, if for all $A\in\aaa$, $B\in \bb$ one has $A\cap B\ne \emptyset$. For the case $\aaa=\bb$ we use the term \underline{intersecting}.\end{opr}

 For $0\le k\le n$ let ${[n]\choose k}$ denote the collection of all $k$-subsets of $\{1,\ldots, n\}$.

\begin{thm*}[Erd\H os-Ko-Rado \cite{EKR}] suppose that $n\ge 2k>0$, and the family $\aaa\subset {[n]\choose k}$ is intersecting. Then
\begin{equation}\label{eq2} |\mathcal A|\le {n-1\choose k-1}.\end{equation}
\end{thm*}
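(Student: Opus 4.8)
The plan is to use Katona's cyclic permutation (averaging) argument, which gives the cleanest route to the tight bound $\binom{n-1}{k-1}$. First I would fix the notion of a \emph{cyclic ordering} of $[n]$: one of the $(n-1)!$ ways of arranging the $n$ elements around a circle, and call a set of $k$ consecutive elements an \emph{arc}. The heart of the argument is a purely local statement: in any single cyclic ordering, an intersecting family $\mathcal{A}$ can contain at most $k$ of the $n$ arcs.

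To prove this local claim, I would suppose some arc $A$ of the ordering lies in $\mathcal{A}$ (otherwise the count is trivially $0$), and write $i$ for its starting position. Every arc meeting $A$ must start within the $2k-1$ positions $\{i-k+1,\ldots,i+k-1\}$, giving $2k-1$ candidate arcs. I would then pair the $2k-2$ arcs other than $A$ into $k-1$ pairs in which the two arcs are \emph{disjoint}, namely pairing the arc starting at $i+t$ with the one starting at $i+t-k$ for $1\le t\le k-1$. This disjointness is exactly where the hypothesis $n\ge 2k$ enters: the two paired arcs together occupy $2k$ consecutive positions, which do not wrap around and overlap only when the circle is long enough. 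Since $\mathcal{A}$ is intersecting, at most one arc from each such pair can lie in $\mathcal{A}$, so together with $A$ we obtain at most $1+(k-1)=k$ arcs, as claimed.

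With the local bound in hand, the global bound follows by double counting incidences between cyclic orderings and arcs belonging to $\mathcal{A}$. By symmetry each fixed $k$-set occurs as an arc in exactly $k!(n-k)!$ of the $(n-1)!$ orderings, so the number of such incidences equals $|\mathcal{A}|\cdot k!(n-k)!$. Summing the local bound over all $(n-1)!$ orderings bounds the same quantity from above by $(n-1)!\cdot k$. Combining the two estimates gives $|\mathcal{A}|\cdot k!(n-k)! \le k\cdot(n-1)!$, and simplifying the factorials yields $|\mathcal{A}|\le \binom{n-1}{k-1}$.

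I expect the main obstacle to be the local claim about arcs, specifically checking the disjoint pairing and pinning down precisely where $n\ge 2k$ is used; the averaging step is then routine once the count of how often a set appears as an arc is set up correctly. An alternative, should one wish to avoid the circle method, would be a shifting (compression) argument that reduces $\mathcal{A}$ to a left-compressed intersecting family and then bounds it directly, but the cyclic argument is shorter and self-contained, so I would present that.
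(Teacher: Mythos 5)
Your proof is correct: it is Katona's cycle method, and the details check out. The pairing of the arc starting at $i+t$ with the arc starting at $i+t-k$ (for $1\le t\le k-1$) produces disjoint pairs exactly because the two arcs together span $2k\le n$ positions, and the normalization $k!\,(n-k)!$ for the number of cyclic orderings in which a fixed $k$-set forms an arc gives $|\aaa|\,k!\,(n-k)!\le k\,(n-1)!$, i.e.\ $|\aaa|\le\binom{n-1}{k-1}$. Note, however, that the paper does not prove this statement at all: it quotes the Erd\H os--Ko--Rado theorem as a known result, pointing instead to Daykin's observation \cite{D} that it follows from the Kruskal--Katona theorem. That choice is not incidental: the Kruskal--Katona route also yields Proposition~\ref{prop1}, the cross-intersecting version, which is the form actually needed in the proof of Theorem~\ref{thm1}, and which your cyclic argument does not readily deliver. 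On the other hand, your argument coincides with the machinery the paper deploys in Section~3 for Theorem~\ref{thm2}: Lemma~\ref{lem3} is precisely the ``at most $k$ arcs per cyclic order'' claim, proved by the same disjoint-pairing trick you use, and the concluding averaging over permutations is your double count. So your proof is self-contained and correct, and it matches the paper's own toolkit --- just applied to the one statement the paper preferred to cite rather than prove.
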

As Daykin \cite{D} observed, \eqref{eq2} can be deduced from the Kruskal-Katona Theorem (\cite{Kr}, \cite{Ka}). the same approach yields the following version of \eqref{eq2} for cross-intersecting families.

\begin{prop}\label{prop1} Let $a,b,m$ be integers, $m\ge a+b$. Suppose that $\mathcal A\subset {[m]\choose a}$ and $\bb\subset {[m]\choose b}$ are cross-intersecting. Then either $|\aaa|\le {m-1\choose a-1}$ or $|\bb|\le {m-1\choose b-1}$ hold.
\end{prop}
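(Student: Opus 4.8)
The plan is to derive Proposition~\ref{prop1} from the Kruskal--Katona theorem exactly in the spirit of Daykin's proof of \eqref{eq2}, the only extra ingredient being a single passage to complements. I will establish the dichotomy in the equivalent implicational form: assuming $|\bb|>{m-1\choose b-1}$, I will show $|\aaa|\le{m-1\choose a-1}$. First I would replace $\bb$ by the family of complements $\bb^c:=\{[m]\setminus B:B\in\bb\}\subset{[m]\choose m-b}$, which has the same cardinality $|\bb^c|=|\bb|$. The key elementary observation is the equivalence $A\cap B\ne\emptyset \iff A\not\subseteq [m]\setminus B$; hence $\aaa,\bb$ being cross-intersecting is the same as saying that no $A\in\aaa$ is contained in any member of $\bb^c$. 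In other words, $\aaa$ is disjoint from the $a$-th lower shadow of $\bb^c$, where for a family $\mathcal C\subset{[m]\choose s}$ and an integer $t\le s$ I write $\Delta_t\mathcal C$ for the family of all $t$-sets contained in some member of $\mathcal C$. This shadow lives at level $a$ precisely because $m-b\ge a$, which is the hypothesis $m\ge a+b$.

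The heart of the argument is then a lower bound on $|\Delta_a\bb^c|$ via Kruskal--Katona in its Lov\'asz (real-exponent) form: if $|\mathcal C|={x\choose s}$ for a real number $x\ge s$, then $|\Delta_t\mathcal C|\ge{x\choose t}$ for every $t\le s$. I would write $|\bb^c|={x\choose m-b}$; since $|\bb^c|=|\bb|>{m-1\choose b-1}={m-1\choose m-b}$ and $x\mapsto{x\choose m-b}$ is strictly increasing on $[m-b,\infty)$, monotonicity forces $x>m-1$. Applying the shadow bound at level $t=a$ gives $|\Delta_a\bb^c|\ge{x\choose a}>{m-1\choose a}$. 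Finally, because $\aaa$ and $\Delta_a\bb^c$ are disjoint subfamilies of ${[m]\choose a}$, I obtain $|\aaa|\le{m\choose a}-|\Delta_a\bb^c|<{m\choose a}-{m-1\choose a}={m-1\choose a-1}$ by Pascal's identity, which is exactly the desired conclusion.

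The individual steps are short, so the main thing to get right is the bookkeeping that makes Kruskal--Katona applicable. I expect the two subtle points to be: (i) verifying that the relevant binomial coefficients sit in the regime where the Lov\'asz form applies, namely $a\le m-1$ and $m-b\ge a$ (both following from $m\ge a+b$ together with $a,b\ge 1$), and that the parameter $x$ is genuinely $>m-1$, for which one needs the strict monotonicity of $x\mapsto{x\choose s}$ on $[s,\infty)$; and (ii) disposing of the trivial boundary cases separately (e.g. $\bb=\emptyset$, or $|\bb|$ already below its threshold, where there is nothing to prove). With these checks in place the displayed chain of inequalities closes the argument.
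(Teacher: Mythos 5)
Your proof is correct, and it is essentially the paper's own approach: the paper gives no written-out proof of Proposition~\ref{prop1}, but states that it follows from the Kruskal--Katona theorem by the same method Daykin used to derive the Erd\H os-Ko-Rado bound \eqref{eq2}, which is exactly your complementation-plus-shadow argument (here executed in the Lov\'asz real-exponent form). The bookkeeping you flag — $m-b\ge a$, strict monotonicity of $x\mapsto{x\choose s}$ for $s\ge 1$, and the degenerate cases $\bb=\emptyset$ or $a,b=0$ — all checks out, so there is no gap.
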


Note that stronger versions of this proposition were proved by Pyber \cite{P}, Matsumoto and Tokushige \cite{MT}, and the authors of this note \cite{FK3}.

\section{The proof of Theorem~\ref{thm1}}

Let $\ff$ be our family of vectors. For a $(0,\pm 1)$-vector $v=(v_1,\ldots, v_n)$ let $S(v)$ denote its \underline{support}, i.e., \begin{equation*}S(v):=\{i\in[n]: v_i\ne 0\}.\end{equation*} Define also \begin{align*}S_+(v):=\{i:v_i= +1\},\\ S_-(v):=\{i:v_i=-1\}.\end{align*} Obviously, $S(v) = S_+(v)\sqcup S_-(v)$.

Also, for $k\ge l$ two vectors $v,w$ satisfy $(v,w)=-2l$ iff $S_-(v)\subset S_+(w), S_-(w)\subset S_+(v)$ and $S_+(v)\cap S_+(w)=\emptyset$ hold simultaneously.

Our assumption is that no such pair $v,w$ exist in $\ff$. For a pair $A,B$ of disjoint $l$-element sets we define $\ff(A,B)$ to be the family of those $(k-l)$-element sets $C$ that the vector $u$ defined by $S_+(u) = A\sqcup C$, $S_-(u) = B$ is in $\ff$.

\begin{lem}\label{lem1}
For disjoint $l$-subsets $A,B\subset [n]$ the two families $\ff(A,B)$ and $\ff(B,A)$ are cross-intersecting.
\end{lem}
\begin{proof}
Suppose the contrary and let $C\in \ff(A,B), D\in \ff(B,A)$ be disjoint $(k-l)$-sets. Then the vectors $v,w$ determined by $S_+(v) = A\sqcup C$, $S_-(v) = B$, $S_+(w) = B\sqcup D$, $S_-(w) = A$ are both in $\ff$. However, $\langle v, w\rangle = -2l$, a contradiction.
\end{proof}

This lemma and Proposition \ref{prop1} motivate the following procedure. For all ${n\choose 2l}{2l\choose l}$ choices of a pair of disjoint $l$-sets $A$ and $B$, if $|\ff(A,B)|\le {n-2l-1\choose k-l-1}$, then delete from $\ff$ {\it all} vectors $v$ with $S_-(v) = B$, $A\subset S_+(v)$.

Let $\ff'$ be the collection of remaining vectors and note:
\begin{equation}\label{eq3} |\ff'|\ge |\ff|-{n\choose 2l}{2l\choose l}{n-2l-1\choose k-l-1}.\end{equation}
Let us fix now a $(k+l)$-element set $T\subset [n]$ and consider the family $\bb\subset {T\choose l}$ defined as follows:
$$\bb:= \Big\{B\in {T\choose l}: \exists v\in\ff', S(v) = T, S_-(v) = B\Big\}.$$

\begin{lem}\label{lem2} The family $\mathcal B$ is intersecting. \end{lem}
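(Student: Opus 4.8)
The plan is to argue by contradiction: assume $\bb$ contains two \emph{disjoint} sets $B_1,B_2\in{T\choose l}$, and force a violation of the deletion procedure via Proposition~\ref{prop1}. By definition of $\bb$ there are vectors $v_1,v_2\in\ff'$ with $S(v_1)=S(v_2)=T$ and $S_-(v_i)=B_i$; since $S_+$ and $S_-$ partition the support, this gives $S_+(v_i)=T\setminus B_i$, a $k$-set.

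The first key step is to read off from each surviving $v_i$ a strict lower bound on a cross-intersecting family. Because $B_1\cap B_2=\emptyset$ and both lie inside $T$, we have $B_2\subset T\setminus B_1=S_+(v_1)$, so setting $C_1:=S_+(v_1)\setminus B_2=T\setminus(B_1\cup B_2)$ (a $(k-l)$-set) exhibits $v_1$ as the vector associated to $C_1\in\ff(B_2,B_1)$. In particular $v_1$ has $S_-(v_1)=B_1$ and $B_2\subset S_+(v_1)$, so it is precisely one of the vectors the deletion step would have removed had the ordered pair $(A,B)=(B_2,B_1)$ satisfied $|\ff(B_2,B_1)|\le{n-2l-1\choose k-l-1}$. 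Since $v_1$ instead survived into $\ff'$, that test must have failed, forcing $|\ff(B_2,B_1)|>{n-2l-1\choose k-l-1}$. Symmetrically, $B_1\subset T\setminus B_2=S_+(v_2)$ identifies $v_2$ with a member of $\ff(B_1,B_2)$ and yields $|\ff(B_1,B_2)|>{n-2l-1\choose k-l-1}$.

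Next I would combine these two strict inequalities with Lemma~\ref{lem1} and Proposition~\ref{prop1}. Applying Lemma~\ref{lem1} with $A=B_2$, $B=B_1$ shows that $\ff(B_2,B_1)$ and $\ff(B_1,B_2)$ are cross-intersecting; moreover both are $(k-l)$-uniform families on the ground set $[n]\setminus(B_1\cup B_2)$, which has $m:=n-2l$ elements. The standing hypothesis $n\ge 2k$ gives exactly $m=n-2l\ge 2(k-l)$, so Proposition~\ref{prop1} applies with $a=b=k-l$ and concludes that at least one of $|\ff(B_2,B_1)|\le{n-2l-1\choose k-l-1}$ or $|\ff(B_1,B_2)|\le{n-2l-1\choose k-l-1}$ holds. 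This contradicts the two strict inequalities just obtained, so no disjoint pair $B_1,B_2$ can occur and $\bb$ is intersecting.

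The cardinality bookkeeping is routine; the point requiring care is the logical direction in the middle step. The bound $|\ff(B_2,B_1)|>{n-2l-1\choose k-l-1}$ is \emph{not} a property of the single vector $v_1$, but a consequence of the fact that $v_1$'s presence in $\ff'$ certifies the corresponding deletion test failed. The main obstacle is therefore keeping straight which ordered pair $(A,B)$ governs which vector, and verifying that the Proposition~\ref{prop1} ground set has size exactly $n-2l$ so that the threshold there matches ${n-2l-1\choose k-l-1}$ and the condition $m\ge a+b$ becomes precisely $n\ge 2k$.
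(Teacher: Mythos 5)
Your proof is correct and is essentially the paper's own argument: both extract from the survival of the two vectors the strict lower bounds on $|\ff(B_2,B_1)|$ and $|\ff(B_1,B_2)|$, then derive a contradiction by combining Lemma~\ref{lem1} with Proposition~\ref{prop1} applied on the ground set of size $n-2l$ with $a=b=k-l$. Your write-up is in fact slightly more careful than the paper's, making explicit the verification that $m\ge a+b$ reduces to the standing assumption $n\ge 2k$ and which ordered pair governs which deletion test.
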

\begin{proof} Suppose for contradiction that $A,B\in \bb$ are disjoint. By the definition of $\bb$  there are $u,v\in \ff'$ satisfying $S_-(u) = A$, $S_-(v) = B$, $S(u)=S(v) = T$. This implies $A\subset S_+(v), B\subset S_+(u)$.

Since both $u$ and $v$ survived the deletion process, we have
\begin{align*}|\ff(A,B)|>& {n-2l-1\choose k-l-1},\\ |\ff(B,A)|>& {n-2l-1\choose k-l-1}.\end{align*}

However, Proposition~\ref{prop1} shows that $\ff(A,B)$ and $\ff(B,A)$ are {\it not} cross-intersecting. This contradicts Lemma~\ref{lem1}.
\end{proof}

Since $k\ge l$, the Erd\H os-Ko-Rado Theorem implies \begin{equation*}\label{eq4}|\bb|\le {k+l-1\choose l-1}.\end{equation*}
Consequently,
\begin{equation*} |\ff'|\le {n\choose k+l}{k+l-1\choose l-1}.\end{equation*}
Combining with \eqref{eq3}, the inequality \eqref{eq0} follows.

\section{The proof of Theorem~\ref{thm2}}
The proof is based on the application of the general Katona's circle method \cite{Ka2} to $\mathcal V(n,k,l)$. Consider the following subfamily $\mathcal H$ of $\mathcal V(n,k,l)$:
$$\mathcal H := \{\mathbf v = (v_1,\ldots, v_n): \exists i\in[n]: \ v_i = \ldots = v_{i+k-1} = 1,\ v_{i-k} = \ldots = v_{i-k+l-1} = -1\}.$$
We remark that all indices are written modulo $n$. Note that $|\hh| = n$. For any permutation $\sigma$ of $[n]$ we  define $\hh(\sigma):=\{\sigma(H): H\in \hh\}$.

Take a family $\ff$ with no two antipodal vectors.

\begin{lem}\label{lem3} For any permutation $\sigma$ we have $|\hh(\sigma)\cap \ff|\le k.$
\end{lem}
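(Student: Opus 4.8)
The plan is to reduce the statement to a purely cyclic, antipodality‑free packing problem and then close it with a bipartite matching argument. Since relabelling the coordinates by $\sigma$ preserves scalar products, $\hh(\sigma)\cap\ff$ is an antipodal‑free subfamily of the permuted copy $\hh(\sigma)$, and $\hh(\sigma)$ carries the same antipodality pattern as $\hh$. Hence it suffices to prove that any antipodal‑free subfamily of $\hh$ has at most $k$ members. I identify the $n$ vectors of $\hh$ with the residues $i\in\{0,1,\dots,n-1\}$ (indices mod $n$), where $H_i$ is the vector with $S_+(H_i)=\{i,\dots,i+k-1\}$ and $S_-(H_i)=\{i-k,\dots,i-k+l-1\}$.

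First I would pin down exactly which pairs $H_i,H_j$ are antipodal. Writing $d=j-i \pmod n$ and applying the criterion $S_-(v)\subset S_+(w)$, $S_-(w)\subset S_+(v)$, $S_+(v)\cap S_+(w)=\emptyset$ from the start of the section, disjointness of the two $+1$-blocks forces $k\le d\le n-k$, the inclusion $S_-(H_j)\subset S_+(H_i)$ forces $d\le 2k-l$, and the symmetric inclusion forces $d\ge n-2k+l$. Under the hypothesis $2k\le n\le 3k-l$ the middle two constraints become redundant (since then $n-2k+l\le k$ and $2k-l\ge n-k$), so $H_i$ and $H_j$ are antipodal precisely when $d\in\{k,k+1,\dots,n-k\}$; equivalently, they are non‑antipodal exactly when the cyclic distance between $i$ and $j$ is at most $k-1$. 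This is the only place where $n\le 3k-l$ is used.

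It then remains to show that a set $S\subset\{0,\dots,n-1\}$ whose elements are pairwise at cyclic distance $\le k-1$ has $|S|\le k$. Fixing any anchor $s^\ast\in S$, say $s^\ast=0$, every other element lies within cyclic distance $k-1$, so $S=A\sqcup B$ with $A\subset\{0,\dots,k-1\}$ and $B\subset\{n-k+1,\dots,n-1\}$; writing $B=\{-\beta:\beta\in\tilde B\}$ identifies $B$ with $\tilde B\subset\{1,\dots,k-1\}$. Any two elements inside $A$, or inside $B$, are automatically within distance $k-1$, so the only genuine constraints are the cross ones: for $a\in A$ and $\beta\in\tilde B$ one needs $a+\beta\le k-1$ or $a+\beta\ge n-k+1$. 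I would encode a valid $S$ as an independent set in the bipartite graph $\Gamma$ with parts $\{0,\dots,k-1\}$ and $\{1,\dots,k-1\}$, putting an edge between $\alpha$ and $\beta$ whenever $\alpha+\beta\in\{k,\dots,n-k\}$; then $|S|\le\alpha(\Gamma)=|V(\Gamma)|-\nu(\Gamma)$ by K\"onig's theorem. Since $(k-\beta)+\beta=k\in\{k,\dots,n-k\}$ for every $\beta\in\{1,\dots,k-1\}$ (this is where $n\ge 2k$ enters), the map $\beta\mapsto k-\beta$ is a matching of size $k-1$, so $\nu(\Gamma)\ge k-1$ and $\alpha(\Gamma)\le(2k-1)-(k-1)=k$.

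I expect the last paragraph to be the main obstacle. The naive hope, that a pairwise‑close set must lie in a single arc of $k$ consecutive positions, fails: when $n\le 3k-3$ two far‑apart elements can still be compatible by going the long way around the cycle (exactly when the alternative $a+\beta\ge n-k+1$ occurs), so cliques need not be arcs and a one‑anchor arc bound yields only $2k-1$. The matching/K\"onig step is what converts the cross‑constraints into the sharp bound $k$; getting that encoding right, rather than the routine computation of the antipodal pairs, is the crux. (The lemma then feeds the Katona averaging $\mathbb E_\sigma|\hh(\sigma)\cap\ff|=\tfrac{n}{|V(n,k,l)|}|\ff|\le k$, which yields \eqref{eq1}.)
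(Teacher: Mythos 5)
Your proof is correct and is essentially the paper's argument in a different dress: your characterization of the antipodal pairs in $\hh$ (indices at cyclic distance at least $k$, which is where $n\le 3k-l$ enters) is the paper's observation that disjoint positive supports force antipodality, and your matching $\beta\mapsto k-\beta$ is exactly the pairing of the $2k-2$ arcs meeting a fixed arc into disjoint pairs used in Katona's circle argument, which the paper sketches. One remark: K\"onig's theorem is overkill here, since the inequality $\alpha(\Gamma)\le |V(\Gamma)|-\nu(\Gamma)$ holds trivially in every graph (each matching edge contributes at most one vertex to an independent set), and that trivial direction is precisely the ``take one set from each pair'' step of the paper.
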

\begin{proof}
Denote by $\mathcal F'\subset{[n]\choose k}$ the family $\{S_+(F): F\in \ff\}$, and, similarly,  $\mathcal H':=\{S_+(H): H\in \hh(\sigma)\}$.

We claim that $\mathcal H'\cap \mathcal F'$ is an intersecting family. Assume that there are two sets $F'_1,F'_2\in \mathcal H'\cap \mathcal F'$, that are disjoint. W.l.o.g., $F'_2 = [k+1,2k]$. Then $F'_1$ is obliged to contain $[1,l]$, since any cyclic interval of length $k$ in $[n]\setminus [k+1,2k]$ contains $[1,l]$, provided that $n\le 3k-l$.

We conclude that the corresponding vector $v_1\in \mathcal F\cap \mathcal H$ satisfies $S_+(v)\supset [1,l]$. At the same time, by the definition of $\mathcal H$, the vector $v_2$ corresponding to $F'_2$ satisfies $S_-(v_2)= [1,l]$. That is, $S_-(v_2)\subset S_+(v_1)$. Interchanging the roles of $F_1,F_2$, we get that $S_-(v_1)\subset S_+(v_2)$. Moreover, $S_+(v_1)\cap S_+(v_2) = \emptyset$. This means that $v_1$ and $v_2$ are antipodal, a contradiction.

Therefore, the family $\mathcal H'\cap \mathcal F'$ is intersecting. It is proven in \cite{Ka2} that in this case $|\mathcal H'\cap \mathcal F'|\le k$, but we sketch the proof of this simple fact here for completeness. Take a set $H\in \mathcal H'\cap \mathcal F'$. Then the $2k-2$ sets from $\hh'$ that intersect $H$ can be split into pairs of disjoint sets. We can take only one set from each pair.
\end{proof}

The rest of the argument is a standard averaging argument. Let us count in two ways the number of pairs (permutation $\sigma$, a vector from $\mathcal H(\sigma)\cap \mathcal F$). On the one hand, each vector from $\ff$ is counted $n k!l!(n-k-l)!$ times. On the other hand, for each permutation, there are at most $k$ pairs by Lemma~\ref{lem3}. Therefore,
$$|\ff|\,n\,k!\,l!\,(n-k-l)!\le kn!\ \ \ \Leftrightarrow \ \ \ |\ff|\le \frac kn\, \frac {n!}{k!\, l!\,(n-k-l)!} = \frac kn\, |V(n,k,l)|.$$

\end{document}